\theoremstyle{definition}
\newtheorem{defi}{Definition}
\newtheorem{theo}[defi]{Theorem}
\newtheorem{prop}[defi]{Proposition}
\newtheorem{ex}[defi]{Example}
\newtheorem{rem}[defi]{Remark}
\def\ad{{\rm ad}}
\def\det{{\rm det}}
\def\pr{{\rm pr}}
\def\End{{\rm End}}
\def\det{{\rm det}}
\def\Tr{{\rm Tr}}
\def\diag{{\rm diag}}
\def\refe{{\rm ref}}
\def\R{{\mathbb R}}
\def\C{{\mathbb C}}
\def\M{{\cal M}}
\def\H{{\cal H}}
\def\inum{{\sqrt{-1}}}
\begin{document}

\title {Restriction of Donaldson's functional to diagonal metrics on Higgs bundles with non-holomorphic Higgs fields}
\author {Natsuo Miyatake}
\date{}
\maketitle
\begin{abstract} We consider a Higgs bundle over a compact K\"ahler manifold with a smooth, non-holomorphic Higgs field. We assume that the holomorphic vector bundle decomposes into a direct sum of holomorphic line bundles. Under an assumption on the zero set of the non-holomorphic Higgs field, we provide some necessary and sufficient conditions for Donaldson's functional which is restricted to the set of diagonal Hermitian metrics associated with a holomorphic decomposition of the vector bundle to attain a minimum. In particular, when the holomorphic vector bundle decomposes into a direct sum of holomorphic line bundles, we show that we can solve the Hermitian-Einstein equation under a strong assumption even if the Higgs field is non-holomorphic.
\end{abstract}

\section{Introduction}
A {\it Higgs bundle} over a K\"ahler manifold $(X,\omega_X)$ is a pair $(E, \Phi)$ consisting of a holomorphic vector bundle $E$ and a holomorphic section $\Phi$
of $\End E \otimes\bigwedge^{1,0}$ satisfying $\Phi\wedge\Phi=0$. The holomorphic section $\Phi$ is called a
Higgs field. In this paper, we consider Higgs bundles with non-holomorphic Higgs fields, which is not usually assumed in the definition of Higgs bundles.

The Hermitian-Einstein equation and Donaldson's functional can be defined even if the Higgs field is non-holomorphic (see Section 2). 
Therefore, the following question arises: When can the Hermitian-Einstein equation be solved, and if so, what is the meaning of the solution, and what interesting things can be done with it? Before stating the main theorem of this paper, we consider what we might be able to do with the solution if it were. Suppose that $\dim_\C X=1$. Let $(E,\Phi)$ be a Higgs bundle with a holomorphic vector bundle $E$ of $c_1(E)=0$ and a non-holomorphic Higgs field $\Phi$, and $h$ a solution to the Hermitian-Einstein equation. Then, for every open subset $U\subseteq X$ such that $\bar{\partial}\Phi=0$ on $U$, the triplets $(E\left.\right|_U,\Phi\left.\right|_U, h\left.\right|_U)$ is a harmonic bundle on $U$. For example, let $(E,\Phi)\rightarrow X$ be a usual Higgs bundle with a holomorphic Higgs field $\Phi$ and $t:X\rightarrow \C$ a smooth function. Then, in general, $t\Phi$ is not a globally holomorphic Higgs field and $(E,t\Phi, h)$ is not a harmonic bundle for a solution $h$ to the Hermitian-Einstein equation of $(E,t\Phi)$. However, on an open subset $U$ such that $t$ is constant on $U$, the triplets $(E\left.\right|_U,t\Phi\left.\right|_U, h\left.\right|_U)$ is a harmonic bundle. It seems interesting if we can do something that usual Higgs bundles cannot do by using sequences of these ``almost harmonic metrics" which are solutions of the Hermitian-Einstein equation of Higgs bundles with non-holomorphic Higgs fields. Note that for manifolds of dimension 2 or higher, deriving the existence of harmonic metrics using the vanishing Chern classes argument becomes difficult if the Higgs field is non-holomorphic.

This paper aims to provide Theorem \ref{main theorem 1} concerning Higgs bundles with non-holomorphic Higgs fields and their associated Donaldson's functionals. The proof is an application of \cite[Theorem 1]{Miy1}. We first present the exact statement of the theorem, then provide further explanation for the purpose of providing the theorem. We introduce some notations:
\begin{itemize}
\item Let $E\rightarrow (X,\omega_X)$ be a holomorphic vector bundle over a compact K\"ahler manifold and $\Phi$ a smooth section of $\End E\otimes\bigwedge^{1,0}$ satisfying $\Phi\wedge\Phi=0$. Note that we do not use the integrability condition in the following. 
We also suppose that $c_1(E)=0$ for simplicity.
\item We denote by $\Lambda_{\omega_X}$ the adjoint of $\omega_X\wedge$.
\item Suppose that the holomorphic vector bundle $E$ decomposes as $E=L_1\oplus\cdots\oplus L_r$ with holomorphic line bundles $L_1,\dots, L_r\rightarrow X$. We decompose $\Phi$ as $\Phi=\Phi_0+\sum_{i,j=1,\dots, r}\Phi_{i,j}$, where $\Phi_0$ is the diagonal part and $\Phi_{i,j}$ is a (1,0)-form which takes values in $L_j^{-1}L_i$.
\item Let $V$ be an $r-1$-dimensional real vector space defined as $V\coloneqq \{x=(x_1,\dots, x_r)\in\R^r\mid x_1+\cdots+x_r=0\}$. 
\item We define vectors $v_{i,j}\in V (i,j=1,\dots, r) $ as $v_{i,j}\coloneqq u_i-u_j$, where we denote by $u_1,\dots, u_r$ the canonical basis of $\R^r$. 
\item For each $j=1,\dots, r$, we define a real number $\gamma_j$ as $\gamma_j\coloneqq \deg_{\omega_X}(L_j)$ and we also define a vector $\gamma\in V$ as $\gamma\coloneqq (\gamma_1,\dots, \gamma_r)$.
\item We define a space $\H_E$ of Hermitian metrics as 
\begin{align*}
\H_E\coloneqq \{h\mid \text{$h$ is a smooth Hermitian metric on $E$ such that $\det(h)=1$}\}.
\end{align*}
We also define a subset $\diag_L(\H_E)\subseteq \H_E$ as
\begin{align*}
\diag_L(\H_E)\coloneqq \{h\in\H_E\mid \text{$h$ splits into $h=(h_1,\dots, h_r)$ for the decomposition}\}.
\end{align*}
\item For an $h\in\H_E$, we denote by $F_h$ the curvature of $h$, and by $\Phi^{\ast h}$ the adjoint of $\Phi$ with respect to the metric $h$.
\item We fix a metric $K=(K_1,\dots, K_r)\in\diag_L(\H_E)$. 
\end{itemize}
Then the following holds: 
\begin{theo}\label{main theorem 1} {\it Suppose that for each $i,j=1,\dots, r$, if $\Phi_{i,j}\neq 0$, then $\log|\Phi_{i,j}|_{K,\omega_X}^2$ is integrable. Then the following are equivalent:
\begin{enumerate}[(i)]
\item Donaldson's functional $\M(\cdot,K)$ restricted to $\diag_L(\H_E)$ attains a minimum;
\item For any geodesic $(h_t)_{t\in\R}$ such that $h_t\in\diag_L(\H_E)$ for all $t\in\R$, if $\M(h_t,K)$ is not a constant, then $\lim_{t\to\infty}\M(h_t,K)=\infty$;
\item There exists an $h\in\diag(\H_E)$ such that
\begin{align}
\pr(\Lambda_{\omega_X}(F_h+[\Phi\wedge\Phi^{\ast h}]))=0, \label{projection}
\end{align}
where $\pr:\End E\rightarrow \End E$ is the projection to the diagonal part;
\item Donaldson's functional $\M(\cdot, K)$ restricted to $\diag_L(\H_E)$ is bounded below 
and there exist constants $C, C^\prime, C^{\prime\prime}$ such that 
\begin{align*}
|\xi|_{L^2}\leq (\M(h,K)+C)^2+C^\prime \M(h,K)+C^{\prime\prime} \ \text{for all $h\in\diag_L(\H_E)$},
\end{align*}
where we denote by $\xi$ the pair $(f_1,\dots, f_r)$ of functions such that $h=(e^{f_1}K_1,\dots, e^{f_r}K_r)$;
\item The following holds:
\begin{align}
-\gamma\in\sum_{\substack{i,j=1,\dots, r, \\ \Phi_{i,j}\neq 0}}\R_{> 0}v_{i,j}. \label{gamma}
\end{align}
Moreover, if one of the above conditions is satisfied, a minimizer $h\in\diag_L(\H_E)$ of Donaldson's functional restricted to $\diag_L(\H_E)$ is a critical point of Donaldson's functional $\M(\cdot,K):\H_E\rightarrow \R$ if and only if the off-diagonal part of $\Lambda_{\omega_X}[\Phi\wedge\Phi^{\ast h}]$ vanishes.
\end{enumerate}
}
\end{theo}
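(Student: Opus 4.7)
The plan is to reduce Donaldson's functional restricted to $\diag_L(\H_E)$ to the situation covered by \cite[Theorem 1]{Miy1} and then translate the five equivalent conditions of that theorem into (i)--(v). I would parametrize $\diag_L(\H_E)$ by writing $h=(e^{f_1}K_1,\ldots,e^{f_r}K_r)$, so that $\xi=(f_1,\ldots,f_r)$ is a smooth $V$-valued function (the normalization $\det(h)=1$ encodes $\sum_j f_j=0$). Expanding Donaldson's formula, the curvature term contributes a strictly convex quadratic piece in $\xi$ together with a linear piece involving $\gamma$, while the Higgs term yields $\sum_{i,j}\int_X |\Phi_{i,j}|_{K,\omega_X}^2\,e^{f_i-f_j}\,\vol$. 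The log-integrability hypothesis on $|\Phi_{i,j}|_{K,\omega_X}^2$ is exactly the regularity needed to place the reduced functional inside the ``potential-of-polytope'' framework of \cite{Miy1}, after which the argument becomes largely formal.

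Given this reduction, the equivalences (i)$\Leftrightarrow$(ii)$\Leftrightarrow$(iii)$\Leftrightarrow$(iv) are direct translations of the corresponding items in \cite[Theorem 1]{Miy1}. Condition (iii) is the Euler--Lagrange equation of the restricted functional, since the first variation of $\M(\cdot,K)$ in a diagonal direction picks up precisely the diagonal part of $\Lambda_{\omega_X}(F_h+[\Phi\wedge\Phi^{\ast h}])$. The functional is convex along geodesics of $\diag_L(\H_E)$, and these are affine paths $t\mapsto t\phi+\xi_0$ with $\phi$ a $V$-valued function; convexity yields (i)$\Leftrightarrow$(iii). Conditions (ii) and (iv) are the standard coercivity-along-geodesics and quadratic-coercivity statements that, by convexity, are equivalent to attainment of a minimum.

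The substantive content is the equivalence with (v), and this is where \cite[Theorem 1]{Miy1} is used essentially. Along a ray $h_t=(e^{t\phi_1}K_1,\ldots,e^{t\phi_r}K_r)$ with constant $\phi\in V$, the leading-order growth of $\M(h_t,K)$ as $t\to\infty$ is captured by a piecewise-linear expression of the form $\phi\mapsto -\sum_j\gamma_j\phi_j+\sum_{\Phi_{i,j}\neq 0}c_{i,j}\max(\phi_i-\phi_j,0)$ modulo lower-order contributions, and the log-integrability hypothesis is what allows one to justify the limit even at zeros of $\Phi_{i,j}$. Coercivity of $\M(\cdot,K)|_{\diag_L(\H_E)}$ is equivalent to this asymptotic slope being strictly positive on every nonzero $\phi\in V$, and the polar-dual characterization of convex cones then gives exactly $-\gamma\in\sum_{\Phi_{i,j}\neq 0}\R_{>0}v_{i,j}$. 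This cone-duality computation, carried out in the presence of vanishing loci of $\Phi_{i,j}$, is the main obstacle.

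For the moreover statement, a minimizer $h\in\diag_L(\H_E)$ of the restricted functional automatically satisfies (iii), so the diagonal part of $\Lambda_{\omega_X}(F_h+[\Phi\wedge\Phi^{\ast h}])$ vanishes. Since $h$ is diagonal, $F_h$ is diagonal, and therefore the off-diagonal part of $\Lambda_{\omega_X}(F_h+[\Phi\wedge\Phi^{\ast h}])$ coincides with that of $\Lambda_{\omega_X}[\Phi\wedge\Phi^{\ast h}]$. Since $h$ is a critical point of the unrestricted functional $\M(\cdot,K):\H_E\to\R$ exactly when $\Lambda_{\omega_X}(F_h+[\Phi\wedge\Phi^{\ast h}])=0$, the characterization in the moreover clause follows immediately.
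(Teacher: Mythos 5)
Your proposal takes essentially the same route as the paper: parametrize $\diag_L(\H_E)$ by $\xi=(f_1,\dots,f_r)$, identify the restricted Donaldson functional and its Euler--Lagrange equation (\ref{projection}) with the generalized Kazdan--Warner functional and equation of \cite[Theorem 1]{Miy1} (the log-integrability hypothesis being exactly what that theorem requires), and import the equivalences (i)--(v) from there. Your argument for the ``moreover'' clause --- that a diagonal metric has diagonal $F_h$, so given (iii) criticality of the full functional reduces to vanishing of the off-diagonal part of $\Lambda_{\omega_X}[\Phi\wedge\Phi^{\ast h}]$ --- is the natural verification that the paper leaves implicit, and is correct (with the trace-free normalization handled by $\det(h)=1$ and $c_1(E)=0$).
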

\begin{ex} Suppose that the non-holomorphic Higgs field is of the following form:
\begin{align}
&\Phi=\left(
\begin{array}{cccc}
0 & & &\Phi_r\\
\Phi_1 & 0 & &\\
& \ddots& \ddots& \\
&&\Phi_{r-1}&0
\end{array}
\right). \label{cyclic}
\end{align}
If $\Phi_i\neq 0$ and $\log|\Phi_i|_{K,\omega_X}^2$ is integrable for all $i=1,\dots, r$, then the Hermitian-Einstein equation has a solution (see also \cite{Miy2}). 
\end{ex}

The main purpose of providing Theorem \ref{main theorem 1} is to present a specific example of Higgs bundles with non-holomorphic Higgs fields whose Hermitian-Einstein equation can be solved, along with the method to solve this equation, specifically using the variational method. We discuss the solution method, particularly the application of the variational method, in detail. The potential significance of solving the Hermitian-Einstein equation for Higgs bundles with non-holomorphic Higgs fields has been addressed earlier. Apart from that, the problem itself of finding the critical point of Donaldson's functional for a Higgs bundle with a non-holomorphic Higgs field seems interesting, particularly when considering what appropriate assumptions about the Higgs field are. In the context of Donaldson's functional, the role of the Higgs field term is to counterbalance the decrease in energy of the curvature term of the Hermitian metric. It seems natural to make some assumptions about the zeros of the Higgs field in the direction of decreasing curvature energy when attempting to locate the critical point of Donaldson's functional for Higgs bundles with non-holomorphic Higgs fields. We hope that the concrete examples we provide above will serve as a stepping stone in forming a more general statement. 

Additionally, although we depart from the main focus of this paper, which involves examining a Higgs bundle with a non-holomorphic Higgs field, it remains unclear how to demonstrate, even if the Higgs field is holomorphic,  that the functional diverges as the time parameter of the geodesic approaches infinity based on the definition of stability for a broader range of geodesics and Higgs bundles (for the case where the Higgs field is trivial, see \cite{HK1, JMS1}). Theorem \ref{main theorem 1} above was introduced to highlight the ease with which it can be shown that the functional diverges for geodesics along holomorphic decompositions (for the relationship between the stability condition and condition (\ref{gamma}), we refer the reader to \cite{Miy2}).

This paper is organized as follows: In Section 2, we present a detailed definition of the Hermitian-Einstein equation and Donaldson's functional for a Higgs bundle equipped with a non-holomorphic Higgs field, which remains identical to the conventional definition. Section 3 is dedicated to providing a proof of Theorem \ref{main theorem 1}. In Section 4, we briefly discuss the challenges associated with generalizing (ii) of Theorem \ref{main theorem 1} to cover a broader range of geodesics, especially those coupled with a non-holomorphic splitting of the vector bundle.

\begin{rem} Higgs bundles whose Higgs fields have the form given in (\ref{cyclic}) are called cyclic Higgs bundles (see \cite{Miy1, Miy2} and the references therein).\end{rem}

\begin{rem} The author is not aware of any work that extends Hitchin and Simpson's theorem \cite{Hit1, Sim1} to cases where the Higgs field is non-holomorphic. However, there is a work \cite{BT1} that studies the extension to the case where the base manifolds are not complex when the Higgs field is trivial.
\end{rem} 
\begin{rem} For the proof of the implications (i) $\Rightarrow$ (ii), (ii) $\Leftrightarrow$ (v), (i) $\Leftrightarrow$ (iii), (iv) $\Rightarrow$ (i), we do not need to assume anything about the zero set of the non-holomorphic Higgs field.
\end{rem}
\begin{rem} Hitchin section \cite{Hit1} is an example of Higgs bundles whose holomorphic vector bundle is a direct sum of holomorphic line bundles.
\end{rem}

\section{Hermitian-Einstein equation and Donaldson's functional}
In order to clarify the meaning of the terms we use and avoid confusion, this section provides precise definitions of each concept, despite them being the same as the conventional ones \cite{Don1, Kob1, Sim1}. Let $(X,\omega_X)$ be a compact K\"ahler manifold. Let $E\rightarrow X$ be a holomorphic vector bundle and $\Phi$ a smooth section of $\End E\otimes \bigwedge^{1,0}$ satisfying $\Phi\wedge\Phi=0$. 
\begin{defi}[Hermitian-Einstein equation]
We call the following PDE for a Hermitian metric $h$ on $E$ the {\it Hermitian-Einstein equation}:
\begin{align*}
\Lambda_{\omega_X}(F_h^{\perp}+[\Phi\wedge\Phi^{\ast h}])=0,
\end{align*}
where we denote by $F_h^{\perp}$ the trace-free part of the curvature.
\end{defi}
\begin{defi}[Donaldson's functional]\label{Donaldson's functional} For smooth Hermitian metrics $h$ and $K$ on $E$ such that $\det(h)=\det(K)$, we define a real number $\M(h,K)$ as follows:
\begin{align*}
\M(h,K)\coloneqq \int_0^1dt\int_X\inum \Tr(\Lambda_{\omega_X}(F_{h_t}^\perp+[\Phi\wedge\Phi^{\ast h_t}])g_t^{-1}\partial_tg_t),
\end{align*}
where $(h_t)_{0\leq t\leq 1}$ is a piecewise smooth family of Hermitian metrics such that $\det(h_t)=\det(K)$ and $g_t:E\rightarrow E$ is a unique Hermitian endmorphism with respect to $K$ and $h_t$ satisfying $K(g_t\cdot,\cdot)=h_t(\cdot, \cdot)$. As with the usual Donalson's functional (cf. \cite{Don1, Kob1, Sim1}), $\M(h,K)$ does not depend on the choice of the path $(h_t)_{0\leq t\leq 1}$. We call $\M(\cdot,\cdot)$ {\it Donalson's functional}.
\end{defi}
As with the usual Donaldson's functional, the following holds:
\begin{prop}\label{convex}{\it Let $K$ be a smooth metric on $E$ and $(h_t)_{t\in \R}$ a smooth family of Hermitian metrics such that $\det(h_t)=\det(K)$ for all $t\in\R$. We denote by $g_t:E\rightarrow E$ the unique Hermitian endmorphism with respect to $K$ and $h_t$ satisfying $K(g_t\cdot,\cdot)=h_t(\cdot, \cdot)$. Then the following holds:
\begin{enumerate}
\item The following holds:
\begin{align*}
\frac{d}{dt}\M(h_t,K)=\int_X\inum\Tr(\Lambda_{\omega_X}(F_{h_t}^\perp+[\Phi\wedge\Phi^{\ast h_t}])g_t^{-1}\partial_tg_t).
\end{align*}
\item Suppose that there exists a Hermitian endmorophism $s:E\rightarrow E$ with respect to $K$ such that $g_t=e^{ts}$ for all $t\in\R$. Then the following holds:
\begin{align*}
\frac{d^2}{dt^2}\M(h_t,K)=\int_X|(\bar{\partial}+\ad(\Phi))s|_{h_t}^2,
\end{align*}
where $\ad(\Phi)$ denotes $[\Phi, \cdot]$.
\end{enumerate}
}
\end{prop}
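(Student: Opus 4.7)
\textbf{Plan for Proposition \ref{convex}.}

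For part (1), the idea is to extract the first-variation formula directly from the definition once we use the path-independence of $\M$ (Definition \ref{Donaldson's functional}). I would fix $t_0\in\R$ and for small $\epsilon$ reparametrize the segment $\tau\mapsto h_{t_0+\tau\epsilon}$, $\tau\in[0,1]$, from $h_{t_0}$ to $h_{t_0+\epsilon}$; then by path-independence and additivity over concatenations,
\begin{align*}
\M(h_{t_0+\epsilon},K)-\M(h_{t_0},K)=\epsilon\int_0^1 d\tau\int_X\inum\Tr(\Lambda_{\omega_X}(F_{h_{t_0+\tau\epsilon}}^\perp+[\Phi\wedge\Phi^{\ast h_{t_0+\tau\epsilon}}])g_{t_0+\tau\epsilon}^{-1}\partial_tg_{t_0+\tau\epsilon}),
\end{align*}
and dividing by $\epsilon$ and sending $\epsilon\to 0$ will give the claim.

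For part (2), I would first apply (1) and use $g_t^{-1}\partial_tg_t=s$ (which is independent of $t$ when $g_t=e^{ts}$), so that the second differentiation reduces to computing $\partial_t[F_{h_t}^\perp+[\Phi\wedge\Phi^{\ast h_t}]]$. Two pointwise facts are key. First, since $s$ commutes with $g_t$ and is $K$-Hermitian, it remains $h_t$-Hermitian for every $t$; consequently, differentiating the defining identity $h_t(\Phi u,v)=h_t(u,\Phi^{\ast h_t}v)$ in $t$ and using $\partial_t h_t(u,v)=h_t(su,v)$ yields $\partial_t\Phi^{\ast h_t}=[\Phi^{\ast h_t},s]$. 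Second, the classical variational formula $\partial_tF_{h_t}=\bar\partial(\partial^{h_t}s)$ is verifiable in a local holomorphic frame, and preserves tracelessness because $\Tr s=0$. Combining these,
\begin{align*}
\frac{d^2}{dt^2}\M(h_t,K)=\int_X\inum\Tr(\Lambda_{\omega_X}(\bar\partial\partial^{h_t}s+[\Phi\wedge[\Phi^{\ast h_t},s]])s).
\end{align*}

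To identify this integrand with $|(\bar\partial+\ad(\Phi))s|_{h_t}^2$, I would treat the two summands separately. For the Higgs term, cyclicity of trace delivers the pointwise identity $\Tr([\Phi\wedge[\Phi^{\ast h_t},s]]s)=-\Tr([\Phi,s]\wedge[\Phi^{\ast h_t},s])$, and then the K\"ahler identity $\inum\Lambda_{\omega_X}\Tr(\alpha\wedge\alpha^{\ast h_t})=|\alpha|_{h_t}^2$ applied to $\alpha=[\Phi,s]$ (using $\alpha^{\ast h_t}=-[\Phi^{\ast h_t},s]$ since $s=s^{\ast h_t}$) produces $|[\Phi,s]|_{h_t}^2$. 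For the curvature term, I would use the standard integration by parts $\bar\partial\Tr(\partial^{h_t}s\cdot s)=\Tr(\bar\partial\partial^{h_t}s\cdot s)-\Tr(\partial^{h_t}s\wedge\bar\partial s)$, wedge with $\omega_X^{n-1}/(n-1)!$, apply Stokes, and invoke $(\bar\partial s)^{\ast h_t}=\partial^{h_t}s$ to extract $|\bar\partial s|_{h_t}^2$. Orthogonality of forms of different Hodge type kills the cross term, giving the sum as $|\bar\partial s+[\Phi,s]|_{h_t}^2$. The only concern one might anticipate is the non-holomorphicity of $\Phi$, but neither $\bar\partial\Phi=0$ nor $\Phi\wedge\Phi=0$ is invoked anywhere in the second-derivative computation—only the pointwise definition of $\Phi^{\ast h_t}$, trace cyclicity, and $d\omega_X=0$ are used—so the classical argument carries over verbatim.
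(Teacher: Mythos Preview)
Your argument is correct and is precisely the standard computation the paper has in mind: the paper offers no proof of Proposition \ref{convex} at all, simply stating that ``as with the usual Donaldson's functional'' the result holds and pointing to \cite{Don1, Kob1, Sim1}. Your use of path-independence for the first variation, together with the variation formulas $\partial_tF_{h_t}=\bar\partial\partial^{h_t}s$ and $\partial_t\Phi^{\ast h_t}=[\Phi^{\ast h_t},s]$, Stokes' theorem, and Hodge-type orthogonality for the second variation, is exactly the classical derivation being invoked, and your observation that neither $\bar\partial\Phi=0$ nor $\Phi\wedge\Phi=0$ enters the computation justifies the paper's implicit claim that the usual proof goes through verbatim in the non-holomorphic setting.
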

\section{Proof}
\begin{proof}[Proof of Theorem \ref{main theorem 1}]
Proof of Theorem \ref{main theorem 1} is obtained by applying \cite[Theorem 1]{Miy1}. Consider condition (iii) of Theorem \ref{main theorem 1}. Equation (\ref{projection}) for a metric $(e^{f_1}K_1,\dots, e^{f_r}K_r)$ is the following:
\begin{align}
\Delta_{\omega_X}\xi+\sum_{j=1}^r4|\Phi_{i,j}|^2_{K,\omega_X}e^{(v_{i,j}, \xi)}v_{i,j}=-2\inum \Lambda_{\omega_X}F_K, \label{mu}
\end{align}
where $\Delta_{\omega_X}$ denotes the geometric Laplacian, and $\xi$ is defined as $\xi\coloneqq (f_1,\dots, f_r)$. Equation (\ref{mu}) is a special case of equations considered in \cite{Miy1}, and Donalson's functional restricted to $\diag_L(\H_E)$ coincides with the functional introduced in \cite{Miy1}. Then from \cite[Theorem 1]{Miy1} and its proof, one can check that conditions in Theorem \ref{main theorem 1} are equivalent.
\end{proof}

\section{Non-holomorphic splittings} 
As noted in Section 1, it is uncertain how, even when the Higgs field is holomorphic, one can demonstrate from the definition of stability that the functional diverges as the time of the geodesic approaches infinity. This applies to a more general geodesic and a more general Higgs bundle. The same uncertainty remains even if the vector bundle decomposes into a direct sum of holomorphic line bundles, and the geodesic closely approximates one along a holomorphic splitting of the vector bundle.  In this section, we will briefly explore what might happen when the vector bundle decomposes into a direct sum of holomorphic line bundles and we attempt to demonstrate, naively, that the functional diverges for geodesics along non-holomorphic splittings of the vector bundle. We introduce some notations:
\begin{itemize}
\item Let $M_1,\dots, M_r$ be smooth sublinebundles of $E$ such that $E=M_1\oplus\cdots\oplus M_r$. We also fix a holomorphic splitting of $E$: $E=L_1\oplus\cdots\oplus L_r$. Although the following theorem holds no matter what these splittings are, we have in mind the situation where these two splittings are in some sense very close.
\item Let $\diag_L(\H_E)$ (resp. $\diag_M(\H_E)$) be the space of diagonal metrics concerning the decomposition $E=L_1\oplus\cdots \oplus L_r$ (resp. $E=M_1\oplus\cdots\oplus M_r$). 
\item We fix an initial metric $K=(K_1,\dots, K_r)\in \diag_M(\H_E)$. We also fix a metric $h_\refe\in \diag_L(\H_E)$.
\item We denote by $\Phi=\Phi_0+\sum_{i,j=1,\dots, r}\Phi_{i,j}$ the decomposition of $\Phi$ corresponding to the splitting $E=M_1\oplus\cdots\oplus M_r$. 
\item We normalize the volume of $X$ as 1.
\end{itemize}
Then the following holds:
\begin{prop}\label{non-holomorphic} {\it Let $h\in\diag_M(\H_E)$. Then the following holds:
\begin{align}
\M(h, K)&= \int_X(\Psi(s)(\bar{\partial}s),\bar{\partial}s)_{h_\refe}+\inum\int_X\Tr((\delta(\xi)-\overline{\delta(\xi)})\Lambda_{\omega_X}F_{h_\refe}) \notag \\ 
&+\sum_{i,j=1,\dots, r}\int_X2|\Phi_{i,j}|_{K,\omega_X}^2e^{(v_{i,j}, \xi)}+(2\pi\gamma, \overline{\delta(\xi)})+C, \label{functional}
\end{align} 
where each symbol is defined as follows:
\begin{itemize}
\item $\xi=(f_1,\dots, f_r)$ is a pair of functions satisfying $h=(e^{f_1}K_1,\dots, e^{f_r}K_r)$.
\item $s$ is the unique endomorphism satisfying $h=h_\refe(e^{s}\cdot,\cdot)$ and $\delta(\xi)$ the projection of $s$ to the diagonal part of $\End E$ of the decomposition induced from $E=L_1\oplus\cdots\oplus L_r$.
\item $\overline{\delta(\xi)}$ is the average $\int_X\delta(\xi)$ of $\delta(\xi)$.
\item $\gamma\coloneqq (\deg_{\omega_X}(L_1),\dots, \deg_{\omega_X}(L_r))$ is concerned to be a diagonal matrix.
\item The definition of $\Psi:\R\times\R\rightarrow \R$ is the same as that of \cite[P.882]{Sim1} and the definition of $\Psi(s)(\bar{\partial}s)$ is the same as that of \cite[pp.879-882]{Sim1}.
\item $C$ is a constant which is independent of $\xi$.
\end{itemize}
}
\end{prop}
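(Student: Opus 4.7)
The approach is to invoke the cocycle property of Donaldson's functional and then apply a Simpson-type integrated formula to compute what remains. Simpson's derivation in \cite[pp.~879--882]{Sim1} only uses the first-variation formula, which is available in our non-holomorphic setting via Proposition \ref{convex}(1), so the same derivation persists here.

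The plan is as follows. First, I split $\M(h, K) = \M(h, h_\refe) + \M(h_\refe, K)$; the second summand is constant in $h$ and is absorbed into $C$. Second, for $\M(h, h_\refe)$, I take the path $h_t = h_\refe(e^{ts}\cdot, \cdot)$ and integrate the first-variation formula of Proposition \ref{convex}(1) to obtain
\begin{align*}
\M(h, h_\refe) = \inum \int_X \Tr\bigl(\Lambda_{\omega_X}(F_{h_\refe}^{\perp} + [\Phi \wedge \Phi^{\ast h_\refe}])\, s\bigr) + \int_X \bigl(\Psi(s)(D''s), D''s\bigr)_{h_\refe},
\end{align*}
where $D''s := \bar{\partial}s + [\Phi, s]$.

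Third, I decompose the right-hand side into the pieces appearing in (\ref{functional}). The curvature term is handled using that $F_{h_\refe}$ is block-diagonal for the $L$-decomposition (since $h_\refe \in \diag_L(\H_E)$): tracing against $s$ picks out only $\delta(\xi)$, and splitting $\delta(\xi) = (\delta(\xi) - \overline{\delta(\xi)}) + \overline{\delta(\xi)}$ produces both $\inum \int_X \Tr((\delta(\xi) - \overline{\delta(\xi)})\Lambda_{\omega_X} F_{h_\refe})$ and, by Chern--Weil under the volume normalization, the topological term $(2\pi\gamma, \overline{\delta(\xi)})$. The quadratic form $(\Psi(s)(D''s), D''s)_{h_\refe}$ contains the desired $(\Psi(s)(\bar{\partial}s), \bar{\partial}s)_{h_\refe}$ piece; the remaining commutator pieces involving $[\Phi, s]$, combined with the linear Higgs term $\inum \int_X \Tr(\Lambda_{\omega_X}[\Phi \wedge \Phi^{\ast h_\refe}]\, s)$, should collapse to $\sum_{i,j}\int_X 2|\Phi_{i,j}|^2_{K, \omega_X} e^{(v_{i,j}, \xi)}$ modulo a constant, because $h \in \diag_M(\H_E)$ forces the pointwise identity $|\Phi_{i,j}|^2_{h, \omega_X} = |\Phi_{i,j}|^2_{K, \omega_X} e^{(v_{i,j}, \xi)}$, and path-integrating this exponential along $h_t$ produces the claimed closed form.

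The main obstacle is the last step: isolating the Higgs contributions from Simpson's $\Psi$-term and verifying they repackage with the linear Higgs piece into the explicit exponential sum. A cleaner alternative is to bypass the $\Psi$ formulation for the Higgs part by computing $\int_0^1 \tfrac{d}{dt}\M(h_t, h_\refe)\,dt$ directly along the geodesic, using $\Phi^{\ast h_t} = e^{-ts}\Phi^{\ast h_\refe}e^{ts}$ and a frame adapted to the $M$-splitting to evaluate the $t$-integrals in closed form. As a sanity check, one can verify (\ref{functional}) by differentiating both sides along a one-parameter family of $M$-diagonal metrics: the derivative of the right-hand side must equal the first-variation formula of Proposition \ref{convex}(1), reducing the claim to agreement at a single base metric.
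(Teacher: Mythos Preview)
Your route diverges from the paper's at the very first step, and that divergence is what creates the ``main obstacle'' you flag. The paper does \emph{not} apply the cocycle identity to the full functional $\M$. Instead it first splits $\M=\M_1+\M_2$, where $\M_1$ is the curvature part (the path integral of $\inum\Tr(\Lambda_{\omega_X}F_{h_t}\,g_t^{-1}\partial_tg_t)$) and $\M_2$ is the Higgs part (the path integral of $\inum\Tr(\Lambda_{\omega_X}[\Phi\wedge\Phi^{\ast h_t}]\,g_t^{-1}\partial_tg_t)$). Since both $K$ and $h$ lie in $\diag_M(\H_E)$, the term $\M_2(h,K)$ is evaluated along the $M$-diagonal geodesic $h_t=(e^{tf_1}K_1,\dots,e^{tf_r}K_r)$; on that path $g_t^{-1}\partial_tg_t=\diag(f_1,\dots,f_r)$ and $|\Phi_{i,j}|^2_{h_t}=|\Phi_{i,j}|^2_K\,e^{t(v_{i,j},\xi)}$, so the $t$-integral produces $\sum_{i,j}\int_X 2|\Phi_{i,j}|^2_{K,\omega_X}e^{(v_{i,j},\xi)}$ immediately (up to an $\xi$-independent constant). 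Only $\M_1$ is routed through $h_\refe$ via the cocycle identity $\M_1(h,K)=\M_1(h,h_\refe)+\M_1(h_\refe,K)$, and Simpson's formula is invoked in its Higgs-free form, giving $\int_X(\Psi(s)\bar\partial s,\bar\partial s)_{h_\refe}+\inum\int_X\Tr(s\,\Lambda_{\omega_X}F_{h_\refe})$ with no $[\Phi,s]$ term to disentangle. The trace against $s$ then reduces to a trace against $\delta(\xi)$ because $F_{h_\refe}$ is block-diagonal for the $L$-splitting, and the split $\delta(\xi)=(\delta(\xi)-\overline{\delta(\xi)})+\overline{\delta(\xi)}$ finishes the job.

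By contrast, you push the whole of $\M$ through $h_\refe$ and land on $(\Psi(s)(D''s),D''s)_{h_\refe}$ with $D''s=\bar\partial s+[\Phi,s]$. Your claim that the $[\Phi,s]$-pieces together with the linear Higgs term ``should collapse'' to the exponential sum is true as an identity---it is equivalent, via path-independence of $\M_2$, to the direct computation of $\M_2(h,K)$ along the $M$-path---but it is not something you can read off from the $\Psi$-formula. The obstruction is that $s$ is Hermitian with respect to $h_\refe\in\diag_L(\H_E)$, so its eigenspaces are aligned with neither the $L$- nor the $M$-splitting, whereas $\Phi_{i,j}$ and $\xi$ live in the $M$-frame; there is no algebraic cancellation visible in that picture. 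Your proposed alternative still integrates along the geodesic $h_t=h_\refe(e^{ts}\cdot,\cdot)$, which is again not $M$-diagonal, so passing to an $M$-adapted frame does not make the $t$-integral for the Higgs part close up. The fix is precisely the paper's: separate $\M_2$ \emph{before} changing base points, and compute it on the $M$-diagonal path from $K$ to $h$.
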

\begin{proof} We decompose Donaldson's functional (see Definition \ref{Donaldson's functional}) $\M(h,K)$ as follows: 
\begin{align*}
&\M(h,K)=\M_1(h,K)+\M_2(h,K), \\
&\M_1(h,K)\coloneqq \int_0^1dt\int_X\inum \Tr(\Lambda_{\omega_X}F_{h_t}g_t^{-1}\partial_tg_t), \\
&\M_2(h,K)\coloneqq \int_0^1dt\int_X\inum \Tr(\Lambda_{\omega_X}([\Phi\wedge\Phi^{\ast h_t}])g_t^{-1}\partial_tg_t).
\end{align*}
Then the second term $\M_2(h,K)$ coincides with $\sum_{i,j=1,\dots, r}\int_X2|\Phi_{i,j}|_{K,\omega_X}^2e^{(v_{i,j}, \xi)}$. The first term $\M_1(h,K)$ is further decomposed as follows:
\begin{align*}
\M_1(h,K)=\M_1(h,h_\refe)+\M_1(h_\refe,K). 
\end{align*}
We set $C\coloneqq \M_1(h_\refe,K)$. We decompose $\M_1(h,h_\refe)$ as
\begin{align*}
\M_1(h,h_\refe)&=\int_X(\Psi(s)\bar{\partial}s,\bar{\partial}s)_{h_\refe}+\inum\int_X\Tr(s\Lambda_{\omega_X}F_{h_\refe}) \\
&=\int_X(\Psi(s)\bar{\partial}s,\bar{\partial}s)_{h_\refe}+\inum\int_X\Tr((\delta(\xi)-\overline{\delta(\xi)})\Lambda_{\omega_X}F_{h_\refe})+(2\pi\gamma, \overline{\delta(\xi)}).
\end{align*}
Then we have (\ref{functional}).
\end{proof}
If the splitting $E=M_1\oplus\cdots \oplus M_r$ coincides with the splitting $E=L_1\oplus\cdots \oplus L_r$, then $\overline{\delta(\xi)}$ corresponds with the average $\overline{\xi}$ of $\xi$, up to the average difference between $h_{\refe}$ and $K$. Under these conditions, we can evaluate the functional from below as per \cite{Miy1}. Nevertheless, if the splittings diverge even slightly, naively doing the same will result in (\ref{functional}). 

\begin{rem} For a non-holomorphic splitting, even if the Higgs field is holomorphic and $\Phi_{i,j}\neq 0$, $\log|\Phi_{i,j}|_{K,\omega_X}^2$ is not integrable in general.
\end{rem}
\medskip
\noindent
{\bf Acknowledgements.} I would like to express my gratitude to Ryushi Goto and Hisashi Kasuya for their valuable discussions and many supports. I am very grateful to Yoshinori Hashimoto for his valuable discussions, many supports, helpful comments on this paper, and for informing me of the paper \cite{BT1}. I would also like to express my gratitude to Qiongling Li for answering my questions.

\noindent
E-mail address 1: natsuo.miyatake.e8@tohoku.ac.jp

\noindent
E-mail address 2: natsuo.m.math@gmail.com \\

\noindent
Mathematical Science Center for Co-creative Society, Tohoku University, 468-1 Aramaki Azaaoba, Aoba-ku, Sendai 980-0845, Japan.

\end{document}